\documentclass[12pt]{article}
\usepackage{geometry}
\usepackage{amsfonts}
\usepackage{dsfont}
\usepackage[latin1]{inputenc}
\usepackage{amssymb , amsmath, amsthm, amsopn, amstext, amscd}
\usepackage[T1]{fontenc}
\usepackage{latexsym, enumerate}
\usepackage{graphics}
\usepackage{graphicx}
\usepackage{multirow}

\DeclareMathOperator*{\argmin}{arg\,min}

\newcommand{\mr}{\mathbb{R}}
\newcommand{\mrpn}{\mathbb{R}_0^+}

\newcommand{\mn}{\mathbb{N}}

\newcommand{\me}{\mathbb{E}}

\newcommand{\mpr}{\mathbb{P}}

\newtheorem{theorem}{Theorem}[section]
\newtheorem{proposition}{Proposition}[]
\newtheorem{lemma}{Lemma}[section]
\newtheorem{corollary}{Corollary}[section]
\newtheorem{remark}{Remark}[section]
\newtheorem{example}{\it $\hookrightarrow$ Example}[section]


\title{How~Correlations~Influence Lasso~Prediction}

\author{Mohamed Hebiri$^{\diamond }$ and Johannes C. Lederer$^{\star }$\footnote{JCL acknowledges partial financial support as member of the
    German-Swiss Research Group FOR916 (Statistical
    Regularization and Qualitative Constraints) with grant number 20PA20E-134495/1.} \\ \\
\small{$^{\diamond }$ Universit\'e Paris-Est -- Marne-la-Vall\'ee,}\\ 
\small{5, boulevard Descartes, Champs sur Marne,}\\ 
\small{77454 Marne-la-Vall\'ee, Cedex 2 France.}\\ \\
\small{$^{\star }$ ETH Z\"urich,}\\
\small{R\"amistrasse, 101}\\
\small{8092 Z\"urich, Switzerland.}
}

\date{}

\begin{document}

\maketitle

\begin{abstract}
We study how correlations in the design matrix influence Lasso
prediction. First, we argue that the higher the correlations are, the
smaller the optimal tuning parameter is. This implies in particular that
the standard tuning parameters, that do not depend on the design matrix,
are not favorable. Furthermore, we argue that Lasso prediction works well
for any degree of correlations if suitable tuning parameters are chosen. We
study these two subjects theoretically as well as with simulations.
\\ 
\textbf{Keywords:} Correlations, Lars Algorithm, Lasso, Restricted Eigenvalue, Tuning Parameter.%
\end{abstract}

\section{Introduction}
Although the Lasso estimator is very popular and correlations are present in many of its diverse
applications, the influence of these
correlations is still not entirely understood. Correlations are
surely problematic for parameter estimation and variable selection. The influence
of correlations on prediction, however, is far less
clear.\\

Let us first set the framework for our study. We consider the linear
regression model
\begin{equation}
\label{eq.Correlation.Model}
Y = X\beta_0 + \sigma \epsilon,
\end{equation}
where $Y\in\mr^n$ is the \itshape response vector, \normalfont $X\in\mr^{n\times p}$
is the \itshape design matrix, \normalfont $\epsilon\in\mr^n$ is the
\itshape noise \normalfont and $\sigma\in\mr^+$ is the
\itshape noise level. \normalfont We assume in the following that the noise
level $\sigma$ is known and that the
noise $\epsilon$ obeys an $n$ dimensional normal distribution with
covariance matrix equal to the identity. Moreover, we assume that the
design matrix $X$ is normalized, that is, $\left(X^TX\right)_{jj}=n$ for
$1\leq j \leq p$. Three main tasks are then usually considered: estimating
$\beta_0$ ({\it parameter estimation}), selecting the non-zero
components of $\beta_0$ ({\it
  variable selection}), and estimating $X\beta_0$ ({\it prediction}). Many
applications of the above regression model are {\it high dimensional}, that is, the number of variables $p$ is larger than
the number of observations $n$ but are also {\it sparse}, that is, the true
solution $\beta_0$ has only few nonzero entries. A computationally feasible
method for the mentioned tasks is, for instance, the widely used Lasso estimator introduced in \cite{Tibshirani-LASSO}:
\begin{equation*}
\label{eq.Correlation.Lasso}
  \hat\beta:=\argmin_{\beta\in\mr^p}\left\{\Vert Y-X\beta \Vert_2^2+\lambda\Vert\beta\Vert_1\right\}.
\end{equation*}
In this paper, we focus on the prediction error of this estimator for
different degrees of correlations. 
The literature on the Lasso estimator has become very large, we refer the reader to the well written books  \cite{BookAGS11, Buhlmann11,HastieTibshiraniFriedman2001} and the references therein. \\

Two types of bounds for the prediction error are known in the theory for
the Lasso estimator. On the one hand, there are the so called {\it fast
rate bounds} (see \cite{Bickel09,Lasso2,VandeGeerConditionLasso09}
and references therein). These bounds are nearly optimal but imply
restricted eigenvalues or similar conditions and therefore only apply for
weakly correlated designs. On the other hand, there are the so called {\it
slow rate bounds} (see \cite{HCB08,Kolt10,MM11,RigTsy11}). These bounds are valid for
any degree of correlations but - as their name suggests - are usually
thought of as unfavorable.\\

Regarding the mentioned bounds, one could claim that correlations lead in
general to large prediction errors. However, recent results in
\cite{vdGeer11} suggest that this is not true. It is argued in
\cite{vdGeer11} that for (very) highly correlated designs, small tuning
parameters can be chosen and favorable slow rate bounds are obtained. In the
present paper, we provide more insight into the relation
between Lasso prediction and correlations. We find that the larger the
correlations are, the smaller the optimal tuning parameter is. Moreover, we
find both in theory and simulations that Lasso performs well for any degree
of correlations if the tuning parameter is chosen suitably.\\

We finally give a short outline of this paper: we first discuss
the known bounds on the Lasso prediction error. Then, after some
illustrating numerical results, we study the subject
theoretically. We then present several simulations, interpret our
results and finally close with a discussion.

\section{Known Bounds for Lasso Prediction}
\label{sec:Rates}
To set the context of our contribution, we first discuss briefly the known
bounds for the prediction error of the Lasso estimator. We refer to the
books \cite{BookAGS11} and \cite{Buhlmann11} for a detailed introduction
to the theory of the Lasso.\\ 

{\it Fast rate bounds}, on the one hand, are bounds proportional to the
square of the tuning parameter $\lambda$. These bounds are only valid for 
weakly correlated design matrices. We first recall the corresponding
assumption. Let $a$ be a vector in $\mathbb{R}^p$, $J$ a subset of $\{1,\ldots,p\}$, and finally $a_J$ the
vector in $\mr^p$ that has the same coordinates as $a$ on $J$ and zero
coordinates on the complement $J^c$. Denote the cardinality of a given set
by $|\cdot|$. For a given integer $\bar s$, the {\it Restricted Eigenvalues (RE)} assumption introduced in~\cite{Bickel09}
reads then
\begin{description}
	\item[{\bf Assumption~RE($ \bar s $)}:]
	\begin{equation*}
	\label{eq:CondHypEase}
		\phi(\bar s ) := \min_{J_0 \subset \{1,\ldots,p \}: |J_0| \leq  \bar s } \min_{\Delta \neq 0: \Vert \Delta_{J_0^c} \Vert_1 \leq 3 \Vert \Delta_{J_0} \Vert_1 } \frac{ \Vert X \Delta \Vert_2}{ \sqrt{n} \Vert \Delta_{J_0} \Vert_2 }  
		>0.
	\end{equation*}
\end{description}
The integer $\bar s$ plays the role of a sparsity index and is usually 
comparable to the number of nonzero entries of $\beta_0$.
More precisely, to obtain the following fast rates, it is assumed that $\bar s \geq s$, where $s:=\vert \{j: (\beta_0)_j \neq 0\}\vert $.
Also, we notice that $\phi(\bar
s)\approx 0$ corresponds to correlations. Under the above assumption it holds (see for example Bickel et
al.~\cite{Bickel09} and more recently Koltchinskii et al.~\cite{Kolt10}):
\begin{equation}
\label{eq:boundLassoL0}
\Vert
X(\hat\beta-\beta_0)\Vert_2^2 \leq \frac{\lambda^2\bar s}{n \phi^2(\bar{s})}
\end{equation} 
on the set $\mathcal{T}:=\left\{\sup_{\beta}\frac{2\sigma|\epsilon^TX\beta|}{\Vert
     \beta\Vert_1}\leq \lambda \right\}$. Similar bounds, under slightly
 different assumptions, can be found in \cite{VandeGeerConditionLasso09}.
Usually, the tuning parameter $\lambda$ is chosen proportional to $\sigma
\sqrt{n \log(p)}$. For fixed $\phi$, the above rate then is optimal up to a logarithmic term
(see~\cite[Theorem~5.1]{BTWAggSOI}) and the set $\mathcal{T}$ has a high
probability (see Section~\ref{sec.ad}). For correlated designs, however, this choice of the tuning parameter is
not suitable. This is detailed in the following section.\\

{\it Slow rate bounds}, on the other hand, are bounds only proportional to
the tuning parameter $\lambda$. These bounds are valid for arbitrary
designs, in particular, they are valid for highly correlated designs. The result \cite[Eq. (2.3) in Theorem 1]{Kolt10}
yields in our setting
 \begin{equation}
 \label{eq.slowRate}
   \Vert X(\hat\beta-\beta_0)\Vert_2^2\leq 2\lambda\Vert \beta_0\Vert_1
 \end{equation}
\noindent on the set $\mathcal{T}$. Similar bounds can be found in \cite{HCB08} (for a related work on a truncated version of the Lasso), in \cite[Theorem 3.1]{MM11} (for estimation of a general function in a Banach space), and in \cite[Theorem~4.1]{RigTsy11} (which also applies to the non-parametric setting; note that the corresponding bound can be written in the form above with arbitrary tuning parameter $\lambda$). 
	We note that these bounds depend on  $\Vert\beta_0\Vert_1$ instead
        of $\bar s$. Moreover, they depend on $\lambda$ to the
first power, and these bounds
are therefore considered unfavorable compared to the fast rate bounds.\\

The mentioned bounds are only useful for sufficiently large tuning parameters
such that the set $\mathcal{T}$ has a high probability. This is crucial
for the following. We show that the higher the correlations, the larger the
probability of $\mathcal{T}$ is. Correlations thus allow for small tuning
parameters; this implies for correlated designs, via the factor $\lambda$
in the slow rate bounds, favorable bounds even though no fast rate
bounds are available.

\begin{remark}
\label{rk.slowRateImproved}
The slow rate bound \eqref{eq.slowRate} can be improved if  $\Vert \beta_0\Vert_1$ is large. Indeed, we proof in the Appendix that
 \begin{equation*}
 \label{eq.slowRateImproved}
   \Vert X(\hat\beta-\beta_0)\Vert_2^2\leq 2  \lambda  \min \left\{ \Vert \beta_0\Vert_1\ , \  \Vert (\hat\beta-\beta_0)_{J_0}  \Vert_1 \right\}
 \end{equation*}
\noindent on the set $\mathcal{T}$. That is, the prediction error can be
bounded both with $\Vert \beta_0\Vert_1$ and with the $\ell_1$
estimation error restricted on the sparsity pattern of $\beta_0$. The latter term
can be considerably smaller than $\Vert \beta_0\Vert_1$ (in particular for
weakly correlated designs). A detailed analysis of this observation, however, is not within the scope of the
present paper.
\end{remark}


\section{The Lasso and Correlations}
\label{secFirstEvid}
We show in this section that correlations strongly influence the optimal
tuning parameters. Moreover, we show that - for suitably chosen tuning
parameters - Lasso performs well in prediction for different levels
of correlations. For this, we first present simulations where we compare
Lasso prediction for an initial design with Lasso prediction for an
expanded design with additional impertinent variables. Then, we discuss the
theoretical aspects of correlations. We introduce, in particular, a simple
and illustrating notion about correlations. Further simulations finally
confirm our analysis.

\subsection{The Lasso on Expanded Design Matrices}
Is Lasso prediction becoming worse when many impertinent variables are added to the
design? Regarding the bounds and the usual value of the tuning parameter $\lambda$ described in the last section, one may expect
that many additional variables lead to notably larger optimal tuning parameters and prediction
errors. However, as we see in the following, this is not true in general.\\ 

Let us first describe the experiments. 

\paragraph{Algorithm 1}
We simulate from the linear regression model~\eqref{eq.Correlation.Model}
and take as input the number of observations $n$, the number of variables
$p$, the noise level $\sigma$, the number of nonzero entries of the true
solution $s := \left\{ j: (\beta_0)_j \neq 0 \right\}$ and finally a
correlation factor $\rho\in[0,1)$. We then sample the $n$ independent rows of the design
matrix $X$ from a normal distribution with mean zero and covariance matrix with
diagonal entries equal to $1$ and off-diagonal entries equal to
$\rho$, and we normalize $X$ such that $(X^{\top}X )_{jj} = n$ for $1\leq j \leq n$. Then, we define $(\beta_0)_i:=1$ for $1\leq i \leq s$ and
$(\beta_0)_i:=0$ otherwise, sample the error $\epsilon$ from a standard
normal distribution and compute the response vector $Y$ according to
\eqref{eq.Correlation.Model}. After calculating the Lasso solution $\hat \beta$, we finally compute
the prediction error $\Vert X(\hat\beta-\beta_0)\Vert_2^2$ for different
tuning parameters $\lambda$ and find the optimal tuning parameter, that is,
the tuning parameter that leads to the smallest prediction error.

\paragraph{Algorithm 2}
This algorithm only differs from the above algorithm in one point. In an
additional step after the initial design matrix $X$ is sampled, we add for
each column $X^{(j)}$ of the initial design matrix $p\text{~-~}1$ columns
sampled according to $X^{(j)} + \eta N$. We finally normalize the resulting matrix.
The parameter $\eta$ controls the
correlation among the added columns and the initial columns and $N$ is a
standard normally distributed random vector. Compared to the initial design, we have
now a design with $p^2\text{~-~}p$ additional impertinent variables.
\\

Several algorithms for computing a Lasso solution have been proposed: For example, using interior point methods \cite{CDS98}, using homotopy parameters \cite{Efron-LARS,O02, T05}, or using a so-called shooting algorithm \cite{DDM04,F98,FHHT07}. We use the LARS algorithm introduced
in~\cite{Efron-LARS}, since, among others, Bach et
al.~\cite[Section~1.7.1]{BJMO11} have confirmed the good behavior of this
algorithm when the variables are correlated.

 \begin{figure}[ht]
\begin{center}
\vskip -0.1in
\includegraphics[width=5in] {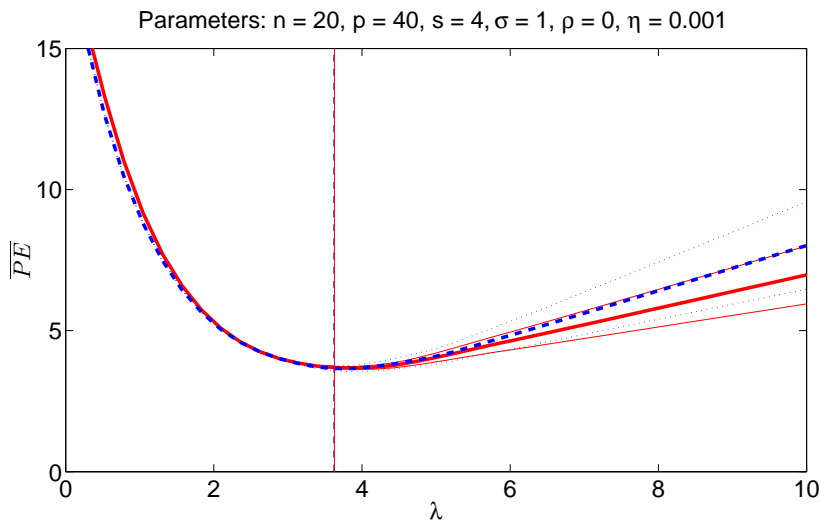}
\vglue-15pt
~
\begin{minipage}[t]{0.90\textwidth}
\caption{\label{fig:FigCom1}\footnotesize We plot the mean values
  $\overline{PE}$ of the prediction errors $\|A\hat\beta - A\beta_0
  \|_2^2$ for $1000$ iterations as a function of the tuning parameter
  $\lambda$. The blue, dashed line corresponds to Algorithm 1, where $A$
  stands for the
  initial design matrices. The blue, dotted lines give the
  confidence bounds. The red, solid line corresponds to Algorithm 2, where
  $A$ represents the extended matrices. The faint, red lines give the confidence
  bounds. The parameters for the algorithms are given in the header. The mean of the optimal tuning
  parameters is $3.62\pm 0.01$ for Algorithm 1 and $3.63\pm
  0.01$ for Algorithm 2. These values are represented by the blue and red vertical lines.}
\end{minipage}
\end{center}
\end{figure}

\begin{figure}[ht]
\begin{center}
\vskip -0.1in
\includegraphics[width=5in] {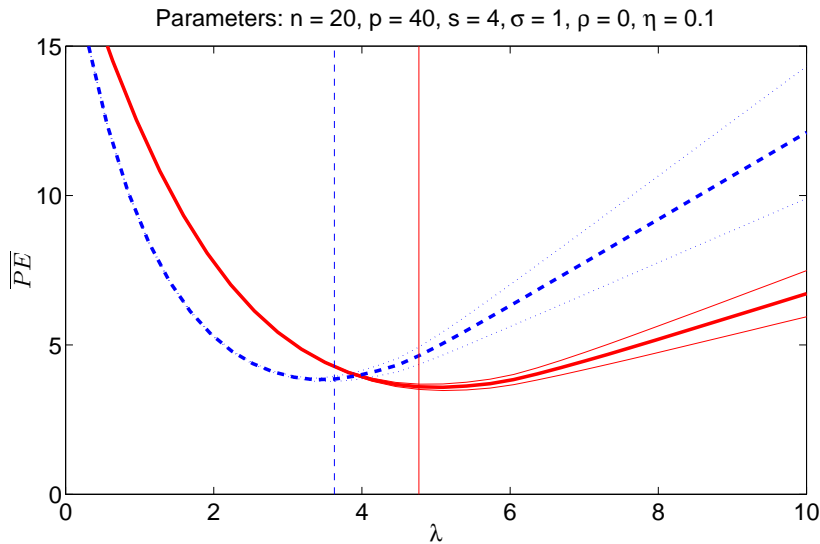}
\vglue-5pt
~
\begin{minipage}[h]{0.90\textwidth}
\caption{\label{fig:FigCom2}\footnotesize We plot the mean values
  $\overline{PE}$ of the prediction errors $\|A\hat\beta - A\beta_0
  \|_2^2$ for $1000$ iterations as a function of the tuning parameter
  $\lambda$. The blue, dashed line corresponds to Algorithm 1, where A
  stands for the
  initial design matrices. The blue, dotted lines give the
  confidence bounds. The red, solid line corresponds to Algorithm 2, where
  A stands for the extended matrices. The faint, red lines give the confidence
  bounds. The parameters for the algorithms are given in the header. The mean of the optimal tuning
  parameters is $3.63\pm 0.01$ for Algorithm 1 and $4.77\pm
  0.01$ for Algorithm 2. These values are represented by the blue and red vertical
  lines.}
\end{minipage}
\end{center}
\end{figure}

\paragraph{Results}
We did $1000$ iterations of the above algorithms for different $\lambda$ and with $n=20$,
$p=40$, $s=4$, $\sigma = 1$, $\rho = 0$ and with $\eta =
0.001$ (Figure \ref{fig:FigCom1}) and $\eta = 0.1$ (Figure
\ref{fig:FigCom2}). We plot the means $\overline{PE}$ of the
prediction errors as a function of $\lambda$. The blue, dashed curves correspond
to the initial designs (Algorithm 1), the red, solid curves correspond to
the extended designs (Algorithm 2). The confidence bounds are
plotted with faint lines in the according color and finally the mean values of the optimal
tuning parameters are plotted with vertical lines.\\
We find in both examples that the minimal prediction errors,
that is, the minima of the red and blue curves, do not differ
significantly. Additionally, in the first example, corresponding to highly correlated added variables ($\eta =
0.001$, see Figure \ref{fig:FigCom1}), also the optimal tuning parameters do
not differ significantly. However, in the second example ($\eta =
0.1$, see Figure \ref{fig:FigCom2}), the optimal tuning parameter is considerably larger for
the extended designs.

\paragraph{First Conclusions}
Our results indicate that tuning parameters proportional to $\sqrt{n\log
p}$ (cf. \cite{Bickel09} and most other contributions on the subject)
independent of the degree of correlations are not favorable.  Indeed, for
Algorithm 2, this would lead to a tuning parameter proportional to
$\sqrt{n\log p^2}=\sqrt{2n\log p}$, whereas for Algorithm~1 to $\sqrt{n\log
p}$. But regarding Figure~\ref{fig:FigCom1}, the two optimal tuning parameters
are nearly equal and hence these choices are not favorable. In contrast,
the results illustrate that the optimal
tuning parameters depend strongly on the level of correlations: for
Algorithm~2 (red, solid curves), the means of the optimal tuning parameters corresponding
to the highly correlated case ($3.63\pm 0.01$, see Figure~\ref{fig:FigCom1}) are be considerably
smaller than the ones corresponding to the weakly
correlated case ($4.77\pm 0.01$, see Figure~\ref{fig:FigCom2}).\\ 
Our results indicate additionally that the minimal mean prediction errors
are comparable for all cases. This implies, that a suitable tuning
parameters lead to good prediction even with additional impertinent
parameters. We only give two examples here but made these observations for
any values of $n$, $p$ and $s$.


\subsection{Theoretical Evidence}
We provide in this section theoretical explanations for the above
observations. For this, we first discuss results derived in
\cite{vdGeer11}. They find that high correlations allow for small tuning
parameters and that this can lead to bounds for Lasso prediction that are
even more favorable than the fast rate bounds. Then, we introduce and apply
new correlation measures that provide some insight for (in contrast to
\cite{vdGeer11}) arbitrary degrees of correlations. For no correlations,
in particular, these results simplify to the classical results.

\subsubsection{Highly Correlated Designs}
\label{sec:SY}
First results for the highly correlated case are derived in
\cite{vdGeer11}. Crucial in their study is the treatment of the
stochastic term with metric entropy.\\

The bound on the prediction error for Lasso reads as follows:
\begin{lemma}\cite[Theorem~4.1 \& Corollary~4.2]{vdGeer11}\label{lemma:SYbasic}
 On the set 
  \begin{equation*}
    \mathcal T_\alpha:=\left\{\sup_{\beta}\frac{2\sigma|\epsilon^TX\beta|}{\Vert
      X\beta \Vert_2^{1-\alpha}\Vert \beta\Vert_1^\alpha}\leq \widetilde{\lambda} \right\}
  \end{equation*}
  we have for $\lambda=(2\widetilde{\lambda}n^{\alpha-1})^\frac{2}{1+\alpha}\Vert
  \beta_0\Vert_1^{\frac{\alpha-1}{1+\alpha}}$ and $0< \alpha < 1$
  \begin{equation*}
    \Vert X(\hat\beta-\beta_0)\Vert_2^2\leq
   \frac{21}{2}\left(2\widetilde{\lambda} n^{\alpha-1}\right)^\frac{2}{1+\alpha}\Vert \beta_0\Vert_1^\frac{2\alpha}{1+\alpha}.
  \end{equation*}
\end{lemma}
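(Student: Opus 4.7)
The starting point is the standard Lasso basic inequality. By optimality of $\hat\beta$, $\Vert Y - X\hat\beta\Vert_2^2 + \lambda \Vert \hat\beta\Vert_1 \leq \Vert Y - X\beta_0\Vert_2^2 + \lambda \Vert \beta_0\Vert_1$; after substituting $Y = X\beta_0 + \sigma\epsilon$ and expanding this becomes
\[
\Vert X(\hat\beta-\beta_0)\Vert_2^2 \leq 2\sigma\, \epsilon^T X(\hat\beta-\beta_0) + \lambda\bigl(\Vert \beta_0\Vert_1 - \Vert \hat\beta\Vert_1\bigr).
\]
Adding $\lambda\Vert \hat\beta - \beta_0\Vert_1$ to both sides and using the reverse triangle inequality $\Vert \hat\beta - \beta_0\Vert_1 - \Vert \hat\beta\Vert_1 \leq \Vert \beta_0\Vert_1$ produces the intermediate bound
\[
\Vert X(\hat\beta-\beta_0)\Vert_2^2 + \lambda \Vert \hat\beta-\beta_0\Vert_1 \leq 2\sigma\, \epsilon^T X(\hat\beta - \beta_0) + 2\lambda \Vert \beta_0\Vert_1,
\]
which already resembles the $\ell_1$-slow-rate bound but keeps $\lambda \Vert \hat\beta-\beta_0\Vert_1$ on the left as an absorption buffer.

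Next I would specialize the definition of $\mathcal{T}_\alpha$ at $\beta = \hat\beta - \beta_0$, replacing the noise term by $\widetilde{\lambda} \Vert X(\hat\beta-\beta_0)\Vert_2^{1-\alpha} \Vert \hat\beta-\beta_0\Vert_1^\alpha$. The heart of the proof is then two successive applications of Young's inequality $ab \leq a^p/p + b^q/q$. First, with conjugate exponents $p = 2/(1-\alpha)$ and $q = 2/(1+\alpha)$ and a suitable scaling constant, this noise bound splits into a fraction of $\Vert X(\hat\beta-\beta_0)\Vert_2^2$ (absorbable into the left-hand side) plus a remainder of the form $K_1(\alpha)\, \widetilde{\lambda}^{2/(1+\alpha)} \Vert \hat\beta-\beta_0\Vert_1^{2\alpha/(1+\alpha)}$. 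Since $r := 2\alpha/(1+\alpha) < 1$ for $0<\alpha<1$, a second application of Young's inequality with conjugate exponents $1/r$ and $1/(1-r)$ peels off from this remainder a piece at most $\lambda \Vert \hat\beta-\beta_0\Vert_1$, also absorbable, leaving a purely analytic residual $K_2(\alpha,\widetilde{\lambda},\lambda)$.

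What then remains is an inequality of the shape $c_0 \Vert X(\hat\beta-\beta_0)\Vert_2^2 \leq K_2(\alpha,\widetilde{\lambda},\lambda) + 2\lambda \Vert \beta_0\Vert_1$, where $K_2$ is a monomial in $\widetilde{\lambda}$ and $\lambda$ with exponents dictated by the two Young steps. The explicit choice $\lambda = (2\widetilde{\lambda} n^{\alpha-1})^{2/(1+\alpha)} \Vert \beta_0\Vert_1^{(\alpha-1)/(1+\alpha)}$ is precisely the one that balances $K_2$ against $2\lambda \Vert \beta_0\Vert_1$, rendering both right-hand terms of order $(2\widetilde{\lambda} n^{\alpha-1})^{2/(1+\alpha)} \Vert \beta_0\Vert_1^{2\alpha/(1+\alpha)}$. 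The $n^{\alpha-1}$ factor naturally arises from the column-normalization $(X^TX)_{jj}=n$, which implicitly links $\Vert X\beta\Vert_2$ and $\Vert\beta\Vert_1$ --- the two quantities in the denominator of $\mathcal{T}_\alpha$ --- via $\Vert X\beta\Vert_2 \leq \sqrt{n}\,\Vert\beta\Vert_1$, and this scaling migrates into the optimal $\lambda$ through the Young calibration.

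The main obstacle will be bookkeeping: choosing the two scaling constants in the Young applications so that the accumulated $(1\pm\alpha)$-factors combine into the clean constant $21/2$ of the statement, uniformly in $\alpha\in(0,1)$. Apart from that, the argument is essentially mechanical and mirrors the classical slow-rate derivation \eqref{eq.slowRate}; the $\alpha$-deformed event $\mathcal{T}_\alpha$ (and the freedom it provides in trading $\Vert X\beta\Vert_2$ against $\Vert\beta\Vert_1$ in the noise control) is what ultimately upgrades the $\lambda \Vert\beta_0\Vert_1$ rate to the $\lambda^{2/(1+\alpha)}$-type rate seen in the conclusion.
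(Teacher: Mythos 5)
First, note that the paper itself offers no proof of this lemma: it is imported verbatim from \cite[Theorem~4.1 \& Corollary~4.2]{vdGeer11}, so your attempt can only be compared with the argument of the cited source, whose overall strategy (basic inequality, use of the interpolated event at $\hat\beta-\beta_0$, absorption) you do reproduce. The problem is that your final balancing step fails for the $\lambda$ prescribed in the statement, and this is precisely the point you dismissed as ``mechanical bookkeeping.'' Write $\Delta=\hat\beta-\beta_0$ and carry out your two Young steps: after absorbing a fraction of $\Vert X\Delta\Vert_2^2$ and the term $\lambda\Vert\Delta\Vert_1$, the residual they leave is of order $\widetilde\lambda^{2/(1-\alpha)}\lambda^{-2\alpha/(1-\alpha)}$, so your scheme yields an inequality of the shape $\Vert X\Delta\Vert_2^2\lesssim \widetilde\lambda^{2/(1-\alpha)}\lambda^{-2\alpha/(1-\alpha)}+\lambda\Vert\beta_0\Vert_1$. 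These two terms balance at $\lambda\asymp\widetilde\lambda^{2/(1+\alpha)}\Vert\beta_0\Vert_1^{(\alpha-1)/(1+\alpha)}$, with no power of $n$, giving $\Vert X\Delta\Vert_2^2\lesssim\widetilde\lambda^{2/(1+\alpha)}\Vert\beta_0\Vert_1^{2\alpha/(1+\alpha)}$. The $\lambda$ of the lemma carries the additional factor $n^{2(\alpha-1)/(1+\alpha)}<1$, and if you insert it into your residual you get $\widetilde\lambda^{2/(1-\alpha)}\lambda^{-2\alpha/(1-\alpha)}=\bigl(n^{1-\alpha}/2\bigr)^{2/(1-\alpha)}\lambda\Vert\beta_0\Vert_1$, i.e.\ roughly $n^2$ times the claimed right-hand side for fixed $\alpha\in(0,1)$. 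So the prescribed $\lambda$ does \emph{not} balance your two terms, and the plan as written proves the display only with $(2\widetilde\lambda)^{2/(1+\alpha)}$ in place of $(2\widetilde\lambda n^{\alpha-1})^{2/(1+\alpha)}$.

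The missing idea is the origin of the factor $n^{\alpha-1}$. Your heuristic that it ``arises from $\Vert X\beta\Vert_2\le\sqrt n\,\Vert\beta\Vert_1$'' never actually enters your chain of inequalities, and that inequality bounds $\Vert X\Delta\Vert_2$ from above by $\sqrt n\Vert\Delta\Vert_1$, which can only produce positive powers of $n$, not the negative power you need. In \cite{vdGeer11} the event and the Lasso are stated for the normalized quantities $\epsilon^TX\beta/n$ and $\Vert X\beta\Vert_2/\sqrt n$, and $n^{\alpha-1}$ is the residue of translating that convention into the unnormalized set $\mathcal T_\alpha$ used here; consistently, the value $\widetilde\lambda\sim\sigma\sqrt{n^{2-\alpha}\log(2/\kappa)}$ of Lemma~\ref{lemma:SJ1} recombines with $n^{\alpha-1}$ in Theorem~\ref{thm:SYBase} to give $\sigma\sqrt{n^{\alpha}\log(2/\kappa)}$. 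A self-contained argument in the paper's notation should therefore either prove the version without $n^{\alpha-1}$ (your two Young steps, or the simpler case distinction ``$\widetilde\lambda\Vert X\Delta\Vert_2^{1-\alpha}\Vert\Delta\Vert_1^{\alpha}\le 2\lambda\Vert\beta_0\Vert_1$ or not,'' deliver exactly that), or first restate $\mathcal T_\alpha$ in normalized form and track the powers of $n$ through the translation; as it stands, the stated inequality does not follow from your steps.
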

\noindent We show in the following that for high correlations the
stochastic term $\mathcal T_\alpha$ has a high probability even for small
$\alpha$ and thus favorable bounds are obtained. The parameter $\alpha$ can
be thought of as a measure of correlations: $\alpha$ small corresponds to
high correlations, $\alpha$ large corresponds to small correlations.\\

The stochastic term $\mathcal T_\alpha$ is estimated using metric
entropy. We recall, that the \itshape covering numbers \normalfont
$N(\delta, \mathcal F, d)$ measure the complexity of a set $\mathcal F$
with respect to a metric $d$ and a radius $\delta$. Precisely, $N(\delta,
\mathcal F, d)$ is the minimal number of balls of radius $\delta$ with
respect to the metric $d$ needed to cover $\mathcal F$. The \itshape
entropy numbers \normalfont are then defined as $H(\delta, \mathcal F,
d):=\log N(\delta, \mathcal F, d)$. In this framework, we say that the
design is highly correlated if the covering numbers
$N(\delta,\operatorname{sconv}\{X^{(1)},...,X^{(p)}\},\Vert \cdot \Vert_2)$
(or the corresponding entropy numbers) increase only mildly with
$1/\delta$, where $\{X^{(1)},...,X^{(p)}\}$ are the columns of the design
matrix and $\operatorname{sconv}$ denotes the symmetric convex hull. This is specified in the following lemma:

\begin{lemma}\cite[Corollary~5.2]{vdGeer11}\label{lemma:SJ1} Let $0<
  \alpha <1 $ be fixed. Then, assuming
  \begin{equation}\label{eq:rent}
 \log\left(1+N(\sqrt n\delta,\operatorname{sconv}\{X^{(1)},...,X^{(p)}\},\Vert \cdot \Vert_2)\right)\leq
 \left(\frac{A}{\delta}\right)^{2\alpha},0<\delta\leq 1,
  \end{equation}
  there exists a value $C(\alpha,A)$ depending on
  $\alpha$ and $A$ only such that for all $\kappa>0$ and for
  \begin{equation*}
 \widetilde{\lambda}= \sigma C(\alpha,A)\sqrt{ n^{2-\alpha} \log(2/\kappa)},
  \end{equation*}
the following bound is valid:
\begin{equation*}
  \mpr(\mathcal T_\alpha)\geq 1-\kappa.
\end{equation*}
\end{lemma}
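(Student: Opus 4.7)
The plan is to control the Gaussian process $g\mapsto 2\sigma\epsilon^{T}g$ uniformly over the symmetric convex hull $\mathcal{S}:=\operatorname{sconv}\{X^{(1)},\ldots,X^{(p)}\}$, by combining a peeling argument in $\|g\|_{2}$ with Dudley's entropy integral and the Borell--Tsirelson--Ibragimov--Sudakov (Borell--TIS) Gaussian concentration inequality. Peeling absorbs the position-dependent weight $\|g\|_{2}^{1-\alpha}$ in the denominator of the event $\mathcal{T}_{\alpha}$, while Dudley converts the assumed entropy bound \eqref{eq:rent} into a bound on the expected Gaussian supremum on each shell.

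First I would exploit the joint $1$-homogeneity in $\beta$ of numerator and denominator to restrict to $\|\beta\|_{1}=1$; then $X\beta\in\mathcal{S}$, and since each column satisfies $\|X^{(j)}\|_{2}=\sqrt{n}$, the hull $\mathcal{S}$ lies in the Euclidean ball of radius $\sqrt{n}$. The event becomes $\mathcal{T}_{\alpha}=\{\sup_{g\in\mathcal{S}\setminus\{0\}}2\sigma|\epsilon^{T}g|/\|g\|_{2}^{1-\alpha}\leq\widetilde{\lambda}\}$. I would then partition $\mathcal{S}\setminus\{0\}$ into dyadic shells
\[
\mathcal{S}_{k}:=\bigl\{g\in\mathcal{S}:\sqrt{n}\,2^{-k-1}<\|g\|_{2}\le\sqrt{n}\,2^{-k}\bigr\},\qquad k=0,1,2,\ldots,
\]
so that on each shell $\|g\|_{2}^{1-\alpha}$ is within a fixed factor of $(\sqrt{n}\,2^{-k})^{1-\alpha}$ and the weighted supremum reduces to a plain Gaussian supremum divided by a constant.

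On a single shell I would apply Dudley's inequality together with \eqref{eq:rent} in the form $H(u,\mathcal{S},\|\cdot\|_{2})\leq(A\sqrt{n}/u)^{2\alpha}$ for $0<u\leq\sqrt{n}$. Because $\alpha<1$, the entropy integral
\[
\mathbb{E}\sup_{g\in\mathcal{S}_{k}}\epsilon^{T}g\;\leq\;C\int_{0}^{\sqrt{n}\,2^{-k}}\Bigl(\frac{A\sqrt{n}}{u}\Bigr)^{\alpha}du
\]
is finite and of order $A^{\alpha}\sqrt{n}\,2^{-k(1-\alpha)}/(1-\alpha)$. Borell--TIS, applied to the function $\epsilon\mapsto\sup_{g\in\mathcal{S}_{k}}\epsilon^{T}g$ which is Lipschitz with constant $\sup_{g\in\mathcal{S}_{k}}\|g\|_{2}\leq\sqrt{n}\,2^{-k}$, yields for every $r_{k}>0$
\[
\mathbb{P}\Bigl(\sup_{g\in\mathcal{S}_{k}}\epsilon^{T}g\geq\mathbb{E}[\sup]+r_{k}\sqrt{n}\,2^{-k}\Bigr)\leq e^{-r_{k}^{2}/2}.
\]

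Finally I would choose $r_{k}^{2}=2\log(2/\kappa)+2(k+2)\log 2$ so that a union bound over $k\ge 0$ has total failure probability at most $\kappa$, and divide the shell-wise bound by the shell-wise lower weight $(\sqrt{n}\,2^{-k-1})^{1-\alpha}$. The $k$-dependent dyadic factors then cancel, modulo the benign tail $\sum_{k}r_{k}2^{-k\alpha}$ coming from the concentration term, and the residual $n$-power aggregates, under the normalization conventions of the statement, to the stated bound $\widetilde{\lambda}=\sigma C(\alpha,A)\sqrt{n^{2-\alpha}\log(2/\kappa)}$ after absorbing numerical constants and $A^{\alpha}$ into $C(\alpha,A)$. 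The main obstacle is precisely this bookkeeping: one has to verify that the $k$-dependence is summable in the union bound, that the residual $n$-exponent agrees with $(2-\alpha)/2$ under the adopted scaling, and that $C(\alpha,A)$ remains controlled uniformly in the peeling parameter. A minor technical nuisance is the behaviour as $\|g\|_{2}\downarrow 0$, which is harmless since the dyadic shells exhaust $\mathcal{S}\setminus\{0\}$ and continuity of $g\mapsto|\epsilon^{T}g|/\|g\|_{2}^{1-\alpha}$ makes the supremum attained on closed slices.
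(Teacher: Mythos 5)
You should first note that the paper itself contains no proof of Lemma~\ref{lemma:SJ1}: it is imported verbatim from \cite[Corollary~5.2]{vdGeer11}. Your peeling-plus-Dudley-plus-Borell--TIS argument is therefore a legitimate self-contained substitute, and it is essentially the same machinery (a weighted Gaussian process controlled through the entropy of the symmetric convex hull) that underlies van de Geer's original result. The individual steps are sound: the reduction to $\Vert\beta\Vert_1=1$ by joint homogeneity, the identity $\{X\beta:\Vert\beta\Vert_1\le 1\}=\operatorname{sconv}\{X^{(1)},\dots,X^{(p)}\}\subseteq\{g\in\mr^n:\Vert g\Vert_2\le\sqrt n\}$, the dyadic shells, the Dudley bound $\me\sup_{g\in\mathcal S_k}\epsilon^Tg\le C A^\alpha\sqrt n\,2^{-k(1-\alpha)}/(1-\alpha)$ obtained from \eqref{eq:rent} with $u=\sqrt n\delta$, the Borell--TIS step with Lipschitz constant $\sqrt n\,2^{-k}$, and the summable choice $r_k^2=2\log(2/\kappa)+2(k+2)\log 2$ (total failure probability $\kappa/4$). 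The shells with small $\Vert g\Vert_2$ are indeed harmless because the deviation term carries the factor $2^{-k\alpha}$.

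The one step that does not check out as written is the final claim that the exponents ``aggregate to'' the stated $n^{(2-\alpha)/2}$. Carrying out your own bookkeeping in the normalization of this paper (columns of norm $\sqrt n$, unnormalized $\epsilon^TX\beta$ and $\Vert X\beta\Vert_2$), the expected part of each shell bound divided by the lower weight $(\sqrt n\,2^{-k-1})^{1-\alpha}$ is of order $A^\alpha n^{\alpha/2}/(1-\alpha)$, independent of $k$, and the deviation part is of order $r_k\,n^{\alpha/2}2^{-k\alpha}$; maximizing over $k$ gives the admissible threshold $\widetilde\lambda\asymp\sigma C(\alpha,A)\sqrt{n^{\alpha}\log(2/\kappa)}$, not $\sqrt{n^{2-\alpha}\log(2/\kappa)}$. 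A one-column design confirms the exponent $\alpha/2$: there the supremum equals $2\sigma|\epsilon^TX^{(1)}|\,n^{-(1-\alpha)/2}\asymp\sigma n^{\alpha/2}\sqrt{\log(2/\kappa)}$. This does not invalidate your proof, but the conclusion must be drawn differently: since $\mathcal T_\alpha$ is increasing in $\widetilde\lambda$ and $n^\alpha\le n^{2-\alpha}$ for $0<\alpha<1$, your (stronger) threshold implies the lemma as stated; the exponent $2-\alpha$ in the statement is an artifact of translating van de Geer's normalized formulation (norms $\Vert\cdot\Vert_2/\sqrt n$ and process $\epsilon^TX\beta/n$) into the unnormalized notation used here, and downstream only the combination $\widetilde\lambda n^{\alpha-1}=\sigma C(\alpha,A)\sqrt{n^\alpha\log(2/\kappa)}$ enters Theorem~\ref{thm:SYBase}. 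So replace the expectation that the residual $n$-exponent ``agrees with $(2-\alpha)/2$'' by the explicit computation above together with this monotonicity remark; left as is, that last aggregation step would fail.
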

\noindent We observe indeed that the smaller $\alpha$ is, the higher the 
correlations are. We also mention that Assumption~\eqref{eq:rent} only applies to highly
correlated designs. An example is given in \cite{vdGeer11}: the assumption
is met if the eigenvalues of the Gram matrix $\frac{X^TX}{n}$
decrease sufficiently fast.\\

Lemma~\ref{lemma:SYbasic} and Lemma~\ref{lemma:SJ1} can now be combined to
the following bound for the prediction error of the Lasso:
\begin{theorem}\label{thm:SYBase}
  With the choice of $\lambda$ as in Lemma~\ref{lemma:SYbasic} and under the assumptions of Lemma~\ref{lemma:SJ1}, it holds that
\begin{equation*}
    \Vert X(\hat\beta-\beta_0)\Vert_2^2\leq
   \frac{21}{2}\left(\sigma C(\alpha,A)\sqrt{ n^{\alpha}\log(2/\kappa)}\right)^\frac{2}{1+\alpha}\Vert \beta_0\Vert_1^\frac{2\alpha}{1+\alpha}
  \end{equation*}
 with probability at least $1-\kappa$.
\end{theorem}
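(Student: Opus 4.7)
The plan is to combine the two lemmas essentially by substitution: Lemma~\ref{lemma:SJ1} guarantees that the event $\mathcal T_\alpha$ holds with high probability for a specific value of $\widetilde\lambda$, while Lemma~\ref{lemma:SYbasic} provides a deterministic bound on $\|X(\hat\beta-\beta_0)\|_2^2$ on that event. So the proof should proceed in two short steps: first invoke Lemma~\ref{lemma:SJ1} with the stated value
\[
\widetilde\lambda \;=\; \sigma\, C(\alpha,A)\sqrt{n^{2-\alpha}\log(2/\kappa)}
\]
to conclude $\mpr(\mathcal T_\alpha)\geq 1-\kappa$ under the metric entropy assumption \eqref{eq:rent}; then, restricted to this event, apply Lemma~\ref{lemma:SYbasic} with the $\lambda$ prescribed there, which yields
\[
\Vert X(\hat\beta-\beta_0)\Vert_2^2 \;\leq\; \tfrac{21}{2}\bigl(2\widetilde\lambda\, n^{\alpha-1}\bigr)^{\!2/(1+\alpha)}\Vert\beta_0\Vert_1^{2\alpha/(1+\alpha)}.
\]

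The only real calculation is to plug the chosen $\widetilde\lambda$ into the base factor $2\widetilde\lambda\, n^{\alpha-1}$ and simplify the exponent of $n$. Concretely, I would write
\[
2\widetilde\lambda\, n^{\alpha-1}
\;=\; 2\sigma\, C(\alpha,A)\, n^{\alpha-1}\sqrt{n^{2-\alpha}\log(2/\kappa)}
\;=\; 2\sigma\, C(\alpha,A)\sqrt{n^{\alpha}\log(2/\kappa)},
\]
using $n^{\alpha-1}\cdot n^{(2-\alpha)/2}=n^{\alpha/2}$. Raising to the power $2/(1+\alpha)$ and substituting back gives exactly the claimed rate, with the harmless factor $2$ absorbed into a redefined $C(\alpha,A)$ (which is already an unspecified constant depending only on $\alpha$ and $A$).

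Since both lemmas are quoted in black-box form, there is essentially no obstacle here: the argument is an algebraic substitution plus a union bound between the deterministic statement of Lemma~\ref{lemma:SYbasic} and the probabilistic control of Lemma~\ref{lemma:SJ1}. The point worth being careful about is that the value of $\lambda$ in Lemma~\ref{lemma:SYbasic} is itself defined in terms of $\widetilde\lambda$, so one must check that the particular choice of $\widetilde\lambda$ from Lemma~\ref{lemma:SJ1} is admissible (that is, that the resulting $\lambda$ lies in the range where Lemma~\ref{lemma:SYbasic} applies, which for $0<\alpha<1$ and $\Vert\beta_0\Vert_1>0$ is automatic). Once this compatibility is noted, the bound holds on $\mathcal T_\alpha$ and therefore with probability at least $1-\kappa$, completing the proof.
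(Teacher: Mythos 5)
Your proposal is correct and is exactly the argument the paper intends: Theorem~\ref{thm:SYBase} is stated as the direct combination of Lemma~\ref{lemma:SJ1} (which fixes $\widetilde\lambda$ so that $\mpr(\mathcal T_\alpha)\geq 1-\kappa$) with the deterministic bound of Lemma~\ref{lemma:SYbasic} on $\mathcal T_\alpha$, followed by the same simplification $n^{\alpha-1}\sqrt{n^{2-\alpha}}=\sqrt{n^{\alpha}}$, with the factor $2$ absorbed into the unspecified constant $C(\alpha,A)$. No separate proof appears in the appendix, so your substitution argument matches the paper's route; your only cosmetic slip is calling the final step a ``union bound,'' when it is simply the deterministic bound holding on the high-probability event.
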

\noindent High correlations allow for small values of $\alpha$ and lead
therefore to favorable bounds. For $\alpha$ sufficiently small, these
bounds may even outmatch the classical fast rate bounds. For moderate or
weak correlations, however, Assumption~\eqref{eq:rent}
is not met and therefore the above lemma does not apply.

\subsubsection{Arbitrary Designs}
\label{sec.ad}
In this section, we introduce bounds that apply to any degree of
correlations. The correlations are in particular allowed to be moderate or
small and the bounds simplify to the classical results for weakly correlated
designs. We first introduce two measure for correlations that are then used to
bound the stochastic term. These results are then combined with the
classical slow rate bound to obtain a new bound for Lasso prediction. We
finally give some simple examples.\\

\paragraph{Two Measures for Correlations}
We introduce two numbers that measure the correlations in the design. For
this, we first define the \itshape correlation function \normalfont
$K:\mr^+_0\to \mn$ as
\begin{align}\label{eq:Kfunc}
K(x):=\min\{&l\in\mn: \exists x^{(1)},\ldots,x^{(l)}\in \sqrt
nS^{n-1},\\&X^{(m)}\in (1+x)\operatorname{sconv}\{x^{(1)},\ldots,x^{(l)}\}~\forall 1\leq m\leq p\},\nonumber
\end{align}


\noindent where $S^{n-1}$ denotes the unit sphere in $\mr^n$. We observe that $K(x)\leq p$ for all $x\in\mrpn$ and that $K$ is
a decreasing function of $x$. A measure for correlations should, as the
metric entropy above, measure how close to one another the columns of the
design matrix are. Indeed, for moderate $x$, $K(x)\approx p$ for uncorrelated designs, whereas $K(x)$
may be considerably smaller for correlated designs.  This information is concentrated in the
\itshape correlation factors 
\normalfont that we define as
\begin{equation*}
K_\kappa:=\inf_{x\in\mrpn}(1+x)\sqrt{\frac{\log(2K(x)/\kappa)}{\log(2p/\kappa)}},
\end{equation*}
$\kappa\in(0,1]$, and as
\begin{equation*}
F:=\inf_{x\in\mrpn}(1+x)\sqrt{\frac{\log(1+K(x))}{\log(1+p)}}.
\end{equation*}
\noindent Since $K(0)\leq p$, it holds that $F,K_\kappa\in
(0,1]$. We also note that similar quantities could can be defined for 
$p=\infty$ (removing the normalization). In any case, large $F$ and $K_\kappa$
correspond to uncorrelated designs, whereas small $F$ and $K_\kappa$ correspond to
correlated designs.

\paragraph{Control of the Stochastic Term}
We now show that small correlation factors allow for small tuning
parameters and thus lead to favorable bounds for Lasso prediction. Crucial in our analysis is again the treatment of a stochastic term similar to the one above.\\

%
%
We prove the following bound in the appendix:
\begin{theorem}\label{lemma.Correlation.main} With the definitions above,
  $\mathcal T$ as defined in Section~\ref{sec:Rates} and for all $\kappa>0$
  and $\lambda \geq
  \lambda_\kappa:=K_\kappa2\sigma\sqrt{2n\log(2p/\kappa)}$, it holds that
\begin{equation*}\label{eq.Correlation.main2}
\mpr\left(\mathcal{T}\right) \geq 1-\kappa.
\end{equation*}
Additionally, independently of the choice of $\lambda$,  %
\begin{equation*}\label{eq.Correlation.main1}
\me\left[\sup_{\Vert\beta\Vert_1\leq M}\sigma\mid\epsilon^T X\beta\mid\right]\leq F\sigma\sqrt{\frac{8n\log(1+p)}{3}}M.
\end{equation*}
\end{theorem}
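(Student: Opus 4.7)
The plan is to reduce both bounds to controlling $\max_{1\leq m\leq p}|\epsilon^T X^{(m)}|$ via $\ell_1$--$\ell_\infty$ duality, and then to exploit the defining property of the correlation function $K$ to replace $p$ in the usual Gaussian maximal inequality by $K(x)$ at the price of a factor $1+x$, before optimizing in $x\in\mrpn$. Indeed, since $\sup_{\beta\neq 0}|\epsilon^T X\beta|/\Vert\beta\Vert_1=\Vert X^T\epsilon\Vert_\infty=\max_m|\epsilon^T X^{(m)}|$, the event $\mathcal T$ is exactly $\{2\sigma\max_m|\epsilon^T X^{(m)}|\leq\lambda\}$ and the expectation in the second inequality equals $M\sigma\,\me[\max_m|\epsilon^T X^{(m)}|]$.

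The heart of the argument is the following observation. Fix $x\in\mrpn$, and by the definition of $K(x)$ pick $x^{(1)},\ldots,x^{(K(x))}\in\sqrt n\,S^{n-1}$ so that for every $m$ one can write $X^{(m)}=(1+x)\sum_i a_i^{(m)}x^{(i)}$ with $\sum_i|a_i^{(m)}|\leq 1$. Taking inner products with $\epsilon$ and applying the triangle inequality gives
\[
\max_m|\epsilon^T X^{(m)}|\;\leq\;(1+x)\max_{1\leq i\leq K(x)}|\epsilon^T x^{(i)}|,
\]
and the right-hand side is the maximum of $K(x)$ centered Gaussians, each with variance $\Vert x^{(i)}\Vert_2^2=n$. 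This is the only non-standard ingredient; everything else is classical Gaussian machinery applied to a collection of $K(x)$ (rather than $p$) variables.

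For the probability bound, I would invoke the Gaussian union bound $\mpr(\max_i|\epsilon^T x^{(i)}|\geq\sqrt{2n\log(2K(x)/\kappa)})\leq\kappa$, so that on an event of probability at least $1-\kappa$ one has $2\sigma\max_m|\epsilon^T X^{(m)}|\leq 2\sigma(1+x)\sqrt{2n\log(2K(x)/\kappa)}$. Factoring out $\sqrt{2n\log(2p/\kappa)}$, this coincides with $\lambda_\kappa$ exactly when $x$ realises the infimum defining $K_\kappa$, which is attained because $K$ takes integer values in $\{1,\ldots,p\}$, so its level sets partition $\mrpn$ into finitely many intervals and $(1+x)$ is minimised at the left endpoint of each. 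For the expectation bound, the tail estimate is replaced by an explicit moment bound of the form $\me[\max_i|Y_i|]\leq\sqrt{(8n/3)\log(1+m)}$ for $m$ centered Gaussians of variance at most $n$, obtained by a Chernoff/Jensen argument on the moment generating function and optimisation in the exponent; taking the infimum in $x$ then converts $(1+x)\sqrt{\log(1+K(x))}$ into $F\sqrt{\log(1+p)}$.

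The main obstacle is not conceptual but bookkeeping: verifying that the infima defining $K_\kappa$ and $F$ are attained (noting that the trivial cover $x^{(m)}=X^{(m)}$ at $x=0$ is always admissible, so $K(0)\leq p$ and the bounds degrade at worst to the classical $\sqrt{2n\log(2p/\kappa)}$ rate), and extracting the precise constant $8/3$ in the expectation bound, which requires a slightly sharper explicit form of the Gaussian maximal inequality than the textbook $\sqrt{2\log m}$ bound rather than any deep new idea.
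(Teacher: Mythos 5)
Your proposal is correct and follows essentially the same route as the paper: the decomposition $X^{(m)}\in(1+x)\operatorname{sconv}\{x^{(1)},\dots,x^{(K(x))}\}$ reduces both bounds to a maximum of $K(x)$ centered Gaussians of variance $n$, which is then treated by a union bound (for the probability statement) and by the $\Psi(u)=e^{u^2}-1$ Orlicz/Jensen maximal inequality with constant $\sqrt{8/3}$ (for the expectation), before optimizing over $x$. The only cosmetic difference is that the paper passes to the infimum over $x$ via monotone convergence instead of arguing attainment, which sidesteps your (true but unproved, compactness-based) claim that the infimum defining $K_\kappa$ is attained.
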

\noindent For small correlation factors $K_\kappa$ and $F$,  the
minimal tuning parameters $\lambda_\kappa$ and the expectation of the
stochastic term are small. For $K_\kappa\to 1$, the minimal tuning parameters
simplify to $2\sigma\sqrt{2n\log(2p/\kappa)}$ (cf. \cite{Bickel09}). Similarly, for $F\to 1$, the expectation of the
stochastic term simplifies to $\sigma
\sqrt{\frac{8n\log(1+p)}{3}}M$.\\

Together with the slow rate bound introduced in Section~\ref{sec:Rates},
this permits 
the following bound:
\begin{corollary}\label{thm.YNew}
  For $\lambda\geq\lambda_\kappa$ it holds that
 \begin{equation*}
   \Vert X(\hat\beta-\beta_0)\Vert_2^2\leq 2\lambda\Vert \beta_0\Vert_1
 \end{equation*}
with probability at least $1-\kappa$. 
\end{corollary}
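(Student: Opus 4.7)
The plan is to combine the two pieces that the paper has already assembled: the probabilistic control of the event $\mathcal{T}$ coming from Theorem~\ref{lemma.Correlation.main}, and the deterministic slow rate bound \eqref{eq.slowRate} quoted in Section~\ref{sec:Rates}. In other words, Corollary~\ref{thm.YNew} is meant to be essentially a one-line consequence of these two statements, and the only thing to check carefully is that the hypotheses line up.

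First, I would invoke Theorem~\ref{lemma.Correlation.main} with the given $\kappa$. Since $\lambda\geq\lambda_\kappa$, the event
\[
\mathcal{T} = \left\{\sup_\beta \frac{2\sigma|\epsilon^T X\beta|}{\Vert\beta\Vert_1}\leq\lambda\right\}
\]
has probability at least $1-\kappa$. Next, I would argue that on $\mathcal{T}$ the bound $\Vert X(\hat\beta-\beta_0)\Vert_2^2 \leq 2\lambda\Vert\beta_0\Vert_1$ holds deterministically. This is the slow rate bound \eqref{eq.slowRate}, which itself follows from the standard ``basic inequality'' for the Lasso: the optimality of $\hat\beta$ in the Lasso criterion gives
\[
\Vert X(\hat\beta-\beta_0)\Vert_2^2 \leq 2\sigma\epsilon^T X(\hat\beta-\beta_0) + \lambda\Vert\beta_0\Vert_1 - \lambda\Vert\hat\beta\Vert_1,
\]
and on $\mathcal{T}$, applied to $\beta=\hat\beta-\beta_0$, the stochastic term is controlled by $\lambda\Vert\hat\beta-\beta_0\Vert_1$, so the triangle inequality $\Vert\hat\beta-\beta_0\Vert_1\leq \Vert\hat\beta\Vert_1+\Vert\beta_0\Vert_1$ collapses everything to $2\lambda\Vert\beta_0\Vert_1$.

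Combining these two ingredients finishes the proof: on an event of probability at least $1-\kappa$ we have the slow rate bound, which is exactly the statement of the corollary. There is essentially no obstacle here: all the work has already been done in Theorem~\ref{lemma.Correlation.main}, where the correlation factor $K_\kappa$ enters to make $\lambda_\kappa$ small for highly correlated designs. The only minor care required is to make sure we use the slow rate bound (rather than a fast rate bound), so that no restricted eigenvalue assumption has to be invoked and the statement remains valid for arbitrary designs. Since \eqref{eq.slowRate} is already stated in the paper as a consequence of the basic inequality on $\mathcal{T}$, I would simply cite it rather than reprove it in full.
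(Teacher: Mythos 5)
Your proposal is correct and follows exactly the paper's route: Corollary~\ref{thm.YNew} is obtained by applying Theorem~\ref{lemma.Correlation.main} to get $\mpr(\mathcal{T})\geq 1-\kappa$ for $\lambda\geq\lambda_\kappa$, and then invoking the deterministic slow rate bound \eqref{eq.slowRate} on $\mathcal{T}$, whose basic-inequality derivation you sketch matches the argument the paper gives in the Appendix (proof of Remark~\ref{rk.slowRateImproved}).
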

\noindent Our contribution to this result concerns the tuning
parameters:  for the classical value
$\lambda=2\sigma\sqrt{2n\log(2p/\kappa)}$, the bound simplifies to the
classical slow rate bounds. Correlations, however, allow for smaller $\lambda$ and thus
lead to more favorable bounds.\\

Let us have a look at some general aspects of the above bound. Most importantly,
Corollary~\ref{thm.YNew} applies to any degree of correlations. In contrast,
Theorem~\ref{thm:SYBase} only applies for highly correlated
designs, and the classical fast rate bounds only apply for weakly correlated
designs. We also observe that the sparsity index $\bar s$, which appears in the
classical fast rate bounds, does not appear in
Corollary~\ref{thm.YNew}. Hence, 
Corollary~\ref{thm.YNew} can be useful even if the true regression vector $\beta_0$ is only
approximately sparse. However, for very large $\|\beta_0\|_1$, the bound is
unfavorable. An example in this context can be found in \cite[Section
2.2]{Candes09}. (However, we do not agree with their conclusions corresponding to this example. 
We stress, in contrast, that 
correlation are not problematic in general.) We finally refer to
Remark~\ref{rk.slowRateImproved} and to  \cite{MM11} for some additional 
considerations on the optimality of this kind of bounds.

\begin{remark}[Weakly Correlated Designs]
Corollary~\ref{thm.YNew} holds for any degree of correlations because it
follows directly from the classical slow rate bound and the properties of
the refined tuning parameter. However, for weakly correlated designs, we
can also make use of the refined tuning parameter to improve the classical
fast rate bound by a factor $K_\kappa^2$. Usually, the gain is moderate,
but in special cases such as sparse designs
(see Example~\ref{example.SparseDesign}), the gain can be large. Finally,
we note that optimal rates, that is, lower bounds for the prediction error $\Vert
X(\hat\beta-\beta_0)\Vert_2^2$, are available for weakly correlated
designs. The optimal rate
is then $s\log(1+\frac{p}{s})$ and is deduced from Fano's Lemma (see
\cite[Theorem 5.1, Lemma A.1]{BTWAggSOI}). 
For similar results in matrix
regression, we refer to
\cite{Kolt10}, where the assumptions on the design needed for such
lower bounds are explicitly stated (see, in
particular, their Assumption~2).
  \end{remark}

\paragraph{Examples}
In this final section, we illustrate some properties of the
correlation numbers $K_\kappa$ and $F$ for various settings. We consider, in particular, design matrices with different geometric properties and with different degrees of correlations. 

\vspace{1.5mm}

\begin{example}[Low Dimensional Design]\label{example.Correlation.exLowDim} 
Let $\operatorname{dim}~\operatorname{span}\{X^{(1)},...,X^{(p)}\}\leq W$. Then,
\begin{equation*}
X^{(j)}\in \sqrt W ~\operatorname{sconv}\{x_1,...,x_W\}~\text{for all}~ 1\leq j \leq p
\end{equation*}
for properly chosen $x_1,...,x_W\in \sqrt n S^{n-1}$ (for example orthogonal vectors in a suitable subspace). Hence,
$K_\kappa\leq(1+\sqrt W)\sqrt{\frac{\log(2W/\kappa)}{\log(2p/\kappa)}}$ and $F\leq (1+\sqrt
W)\sqrt{\frac{\log(1+W)}{\log(1+p)}}$. 
\end{example}

\vspace{1.5mm}

\begin{example}[Sparse Design]\label{example.SparseDesign} 
Let the number of non-zero coefficients in $X^{(j)}$ be such that $\Vert X^{(j)}\Vert_0\leq d$ for all $1\leq j \leq p$, with $d\leq n$. Then,
\begin{equation*}
X^{(j)}\in \sqrt d\operatorname{sconv}\{x_1,...,x_n\}~\text{for all}~ 1\leq j \leq p
\end{equation*}
for properly chosen $x_1,...,x_n\in\sqrt n S^{n-1}$ (for example orthogonal vectors in $\mr^n$). Hence,
$K_\kappa\leq\sqrt d\sqrt{\frac{\log(2n/\kappa)}{\log(2p/\kappa)}}$ and $F\leq \sqrt d\sqrt{\frac{\log(1+n)}{\log(1+p)}}$.
\end{example}

\vspace{1.5mm}

\begin{example}[Weakly Correlated Design]\label{example.Correlation.vhdd}
  We consider a weakly correlated design with $p\gg n$. It turns out
  that the results in Section~\ref{sec:SY} lead to a
  tuning parameter $\lambda\sim n$ . This implies in particular
   that the results in Section~~\ref{sec:SY}, unlike the results in this section, do not simplify to the
  classical results for uncorrelated designs. We give a sketch of the proof in the Appendix.
\end{example}

\vspace{1.5mm}

\begin{example}[Highly Correlated Design]\label{ex.asym} 
We illustrate with an example that highly correlated designs indeed involve small correlation factors $K_\kappa$. This implies, in accordance to
the simulation results, that tuning parameters depending on the design
rather than tuning parameters depending only on $\sigma$, $n$, and $p$ should be
applied.\\
Let us describe how the design matrix $X$ is chosen: Fix an arbitrary $n$ dimensional vector $X^{(1)}$ of Euclidian norm $\sqrt{n}$. 
Then, we add $p-1$ columns
sampled according to $X^{(1)} + \nu N$ where $\nu$ is a positive (but small) constant and $N$ is a
standard normally distributed random vector.
We finally normalize the resulting matrix.
 \\
For simplicity, we do not give explicit constants but present a coarse 
asymptotic result that highlights the main ideas. For this, we consider the
number of variables $p$ as a
function of the number of observations $n$ and impose the usual
restrictions in high dimensional regression, that is, $p \sim n^w$ for a
$w\in\mn$. If the correlations
are sufficiently large, more precisely $\nu\leq\frac{1}{\sqrt 8 n}$, it
holds that
\begin{equation*} 
K_\kappa \to  \frac{1}{\sqrt{w}}\ \text{~as~}n\to\infty
  \end{equation*} 
with probability tending to 1. This implies especially that the standard tuning
parameters $\sim \sigma\sqrt{n\log p}$ can be considerably too large 
if $\nu$ is large. The proof of the above statement is given in the Appendix.
\end{example}

\vspace{1.5mm}

\begin{example}[Equal Columns]
Let the cardinality of the set $|\{ X^{(j)} :1\leq j \leq p\}|=v$.
Then,
$K_\kappa\leq\sqrt{\frac{\log(2 v/\kappa)}{\log(2p/\kappa)}}$ and
$F\leq
\sqrt{\frac{\log(1+v)}{\log(1+p)}}$.
\end{example}


\subsection{Experimental Study}
\label{sec:Sim}

We consider Algorithm~1 with different sets of parameters to make
statements about the influence of the single parameters on Lasso prediction. In particular,
we are interested in the influence of the correlations $\rho$.\\

\begin{table}[ht]
\caption{The means of the optimal tuning parameters
  $\overline{\lambda}_{min}$  and the means of the minimal prediction errors
  $\overline{PE}_{min}$ calculated according to Algorithm~1 with 1000
  iterations and for different sets of parameters.}
\label{table.comparisioncorrelation}
\begin{center}
\begin{sc}
	\begin{tabular}{|l|c|c|c|c||c|c|}
		\hline
		$n$& $p$& $s$& $\sigma$& $\rho$& $\overline{\lambda}_{min}$& $\overline{PE}_{min}$ \\
		\hline
		\hline
		 \multirow{3}{*}{$20$}&  \multirow{3}{*}{$40$}&\multirow{3}{*}{$4$}&\multirow{3}{*}{$1$}&{$0.99$} &$0.69\pm 0.03$ &$1.77\pm0.05$\\	\cline{5-7}
		 & & & & {$0.9$} & $1.58\pm 0.03$&$2.37\pm 0.04$\\ 
	\cline{5-7}
		 & & & & {$0$} & $3.60\pm 0.03$&$3.17\pm 0.03$\\ \hline
		 \hline
		 \multirow{3}{*}{$50$}&  \multirow{3}{*}{$40$}&\multirow{3}{*}{$4$}&\multirow{3}{*}{$1$}&{$0.99$} &$0.67\pm0.03$ &$1.29\pm0.04$\\	\cline{5-7}
		 & & & & {$0.9$}&$1.71\pm0.03$ &$1.85\pm0.03$\\ 
	\cline{5-7}
		 & & & & {$0$} &$3.91\pm0.03$ &$3.48\pm0.02$\\ \hline
		 \hline
		 \multirow{3}{*}{$20$}&  \multirow{3}{*}{$400$}&\multirow{3}{*}{$4$}&\multirow{3}{*}{$1$}&{$0.99$} & $0.97\pm0.03$&$1.75\pm0.05$\\	\cline{5-7}
		 & & & & {$0.9$} &$2.11\pm0.04$&$2.58\pm0.04$\\ 
	\cline{5-7}
		 & & & & {$0$}& $4.82\pm0.03$& $3.34\pm0.03$\\ \hline
		 \hline
		 \multirow{3}{*}{$20$}&  \multirow{3}{*}{$40$}&\multirow{3}{*}{$10$}&\multirow{3}{*}{$1$}&{$0.99$} &$0.59\pm0.03$ &$6.50\pm0.22$\\ 	\cline{5-7}
		 & & & & {$0.9$} & $1.46\pm0.03$&$7.97\pm0.19$\\ 
	\cline{5-7}
		 & & & & {$0$} &$2.90\pm0.03$ &$6.65\pm0.06$\\ \hline
		 \hline
		 \multirow{3}{*}{$20$}&  \multirow{3}{*}{$40$}&\multirow{3}{*}{$4$}&\multirow{3}{*}{$3$}&{$0.99$} &$2.42\pm0.15$ &$6.16\pm0.17$\\	\cline{5-7}
		 & & & & {$0.9$} &$5.33\pm0.13$ &$6.47\pm0.14$\\ 
	\cline{5-7}
		 & & & & {$0$} &$12.33\pm0.10$ &$3.80\pm0.03$\\ \hline
	\end{tabular}
\end{sc}
\end{center}
\end{table}


\paragraph{Results}
We collect in Table~\ref{table.comparisioncorrelation} the means of the
optimal tuning parameters $\overline{\lambda}_{min}$ and the means of the
minimal prediction errors $\overline{PE}_{min}$ for 1000 iterations and
different parameter sets. Let us first highlight the two most important
observations: first, correlations ($\rho$ large) lead to small
tuning parameters. Second, correlations do not necessarily lead to high
prediction errors. In contrast, the prediction errors are mostly smaller
for the correlated settings.\\
Let us now make some other observations. First, we find that the optimal
tuning parameters do not increase considerably when the number of
observations $n$ is increased. In
contrast, the
means of the minimal prediction errors decrease for the correlated case
as expected, whereas this is not true for the uncorrelated case.\\
Second, increasing the number of variables $p$ leads to increasing optimal tuning parameters as
expected (interestingly by factors close to $\sqrt{\frac{\log 400}{\log40}}$,
cf. Corrollary~\ref{thm.YNew}). The means of the
minimal prediction errors do, surprisingly, not increase considerably.\\
Third, as expected, increasing the sparsity $s$ does not
considerably influence
the optimal tuning parameters but leads to increasing means of the minimal prediction
errors.\\
Forth, for $\sigma=3$ both the optimal tuning parameter as well
as the mean of the minimal prediction error increase approximately by a factor
$3$. The mean of the minimal prediction errors for $\sigma=3$ and $\rho=0$
is an exception and remains unclear.\\
We finally mention that we obtained analogeous results for many other 
values of $\beta_0$ and sets of parameters. 

\paragraph{Conclusions}
The experiments illustrate the good performance of the Lasso estimator for
prediction even for highly correlated designs. Crucial is the choice of the
tuning parameters: we found that the optimal tuning parameters depend highly
on the design. This implies in particular that choosing $\lambda$
proportional to $\sqrt{n\log{p}}$ independent of the design is not favorable.


\section{Discussion} 
Our study suggests that correlations in the design matrix are not
problematic for Lasso prediction. However, the tuning parameter has to
be chosen suitable to the correlations. Both, the theoretical
results and the simulations strongly indicate that the larger the correlations are, the
smaller the optimal tuning parameter is. This implies in particular, that
the tuning parameter should not be chosen only as a function of the number
of observations, the number of parameters and the variance. The precise
dependence of the optimal tuning parameter on the correlations is not
known, but we expect that cross validation provides a suitable choice in
many applications.

\paragraph*{Acknowledgments}
We thank Sara van de Geer for the great support. Moreover, we 
thank Arnak Dalalyan, who read a draft of this paper carefully and gave valuable and
insightful comments. Finally, we thank the reviewers for their helpful
suggestions and comments.

\section*{Appendix}

\begin{proof}[Proof of Remark~\ref{rk.slowRateImproved}]
By the definition of the Lasso estimator, we have 
$$
\Vert Y-X \hat\beta \Vert_2^2+\lambda\Vert \hat\beta\Vert_1
\leq \Vert Y-X\beta_0 \Vert_2^2+\lambda\Vert\beta_0\Vert_1.
$$
This implies, since $Y = X\beta_0 + \sigma \epsilon$, that 
$$
\Vert X( \hat\beta - \beta_0) \Vert_2^2
\leq
2\sigma |\epsilon^T X (\hat\beta - \beta_0)| + \lambda\Vert\beta_0\Vert_1 -\lambda\Vert \hat\beta\Vert_1.
$$
Next, on the set $\mathcal{T}$, we have $2\sigma |\epsilon^T X (\hat\beta -
\beta_0)| \leq \lambda \Vert  \hat\beta - \beta_0  \Vert_1$. Additionally,
by the definition of $J_0$,
$$
\Vert  \hat\beta - \beta_0  \Vert_1 = \Vert  (\hat\beta - \beta_0 )_{J_0} \Vert_1 + \Vert  \hat\beta_{J_0^c}  \Vert_1.
$$
Combining these two arguments and using the triangular inequality, we get
\begin{eqnarray*}
\Vert X( \hat\beta - \beta_0) \Vert_2^2
& \leq &
\lambda  \Vert  (\hat\beta - \beta_0 )_{J_0} \Vert_1 +  \lambda \Vert  \hat\beta_{J_0^c}  \Vert_1 + \lambda\Vert\beta_0\Vert_1 - \lambda\Vert \hat\beta\Vert_1
 \\
 & \leq &
 2 \lambda  \Vert  (\hat\beta - \beta_0 )_{J_0} \Vert_1.
\end{eqnarray*}
On the other hand, using the triangular inequality, we also get
$$
\Vert X (\hat\beta - \beta_0) \Vert_2^2 \leq  \lambda \Vert  \hat\beta - \beta_0  \Vert_1 + \lambda\Vert\beta_0\Vert_1 -\lambda\Vert \hat\beta\Vert_1 \leq 2 \lambda\Vert\beta_0\Vert_1.
$$
This completes the proof.

\end{proof}

\begin{proof}[Proof of Theorem~\ref{lemma.Correlation.main}]
We first show that for a fixed $x\in\mr^+_0$, the parameter space
$\{\beta\in\mr^p:\Vert\beta \Vert_1\leq M\}$ can be replaced by the $K(x)$
dimensional parameter space $\{\beta\in\mr^{K(x)}:\Vert\beta \Vert_1\leq
(1+x)M\}$. Then, we bound the stochastic term in expectation and probability and
eventually take
the infimum over $x\in\mr^+_0$ to derive
the desired inequalities.\\
As a start, we assume without loss of generality $\sigma=1$, we fix $x\in\mrpn$ and set $K:=K(x)$. Then, according to the definition of the correlation function
\eqref{eq:Kfunc}, there exist vectors $x^{(1)},...,x^{(K)}\in\sqrt nS^{n-1}$ and numbers $\{\kappa_j(m):1\leq j\leq K
\}$ for all $1\leq m\leq p$ such that $X^{(m)}=\sum_{j=1}^K\kappa_j(m)x^{(j)}$ and $\sum_{j=1}^K|\kappa_j(m)|\leq (1+x)$. Thus,
\begin{align*}
(X\beta)_i=\sum_{m=1}^pX^{(m)}_i\beta_m
=\sum_{m=1}^p\sum_{j=1}^K\kappa_j(m)x^{(j)}_i\beta_m
=\sum_{j=1}^Kx^{(j)}_i\sum_{m=1}^p\kappa_j(m)\beta_m
\end{align*}
and additionally
\begin{align*}
\sum_{j=1}^K|\sum_{m=1}^p\kappa_j(m)\beta_m|
\leq \sum_{m=1}^p |\beta_m|\sum_{j=1}^K|\kappa_j(m)|
\leq  (1+x)\Vert \beta\Vert_1.
\end{align*}
%
These two results imply
\begin{equation*}
\sup_{\Vert\beta\Vert_1\leq M}\mid \epsilon^T X\beta\mid\leq \sup_{\Vert\widetilde\beta\Vert_1\leq (1+x)M}\mid\epsilon^T \widetilde X\widetilde \beta\mid,
\end{equation*}
where $\widetilde{\beta}\in\mr^K$ and
$\widetilde{X}:=(x^{(1)},...,x^{(K)})$. That is, we can replace the $p$
dimensional parameter space by a $K$ dimensional parameter space at the
price of an additional factor $1+x$.\\
We now bound the stochastic term in expectation. First,
we obtain by Cauchy-Schwarz's Inequality
\begin{align*} 
\me\left[\sup_{\Vert\widetilde\beta\Vert_1\leq (1+x)M}\mid\epsilon^T \widetilde X\widetilde \beta\mid\right]&=
\me\left[ \sup_{\Vert\widetilde\beta\Vert_1\leq
  (1+x)M}|\sum_{i=1}^n\sum_{j=1}^K\epsilon_i\widetilde X^{(j)}_i\widetilde\beta_j|\right]\\
&\leq\left[ \me \sup_{\Vert\widetilde\beta\Vert_1\leq
  (1+x)M}\Vert \widetilde\beta\Vert_1\max_{1\leq j \leq
  K}\mid\epsilon^T\widetilde X^{(j)}\mid\right]\\
&=(1+x)M \me\left[ \max_{1\leq j \leq K}\mid\epsilon^T\widetilde X^{(j)}\mid\right].
\end{align*}
Next (cf. the proof of \cite[Lemma~3]{vdGeer11b}), we obtain for $\Psi(x):=e^{x^2}-1$
\begin{equation*}
  \me\left[ \max_{1\leq j \leq K}\mid\epsilon^T\widetilde X^{(j)}\mid\right]\leq \Psi^{-1}(K) \max_{1\leq j \leq K}\Vert\epsilon^T\widetilde X^{(j)}\Vert_\Psi,
\end{equation*}
where $\Vert\cdot\Vert_\Psi$ denotes the Orlicz norm with respect to the
function $\Psi$ (see \cite{vdGeer11b}
for a definition). Since $\frac{\epsilon^T\widetilde X^{(j)}}{\sqrt n}$ is
standard normally
distributed, we obtain $\Vert\frac{\epsilon^T\widetilde X^{(j)}}{\sqrt
  n}\Vert_\Psi=\sqrt\frac{8}{3}$ (see for
example \cite[Page 100]{vdVaart00}). Moreover, one may check that
$\Psi^{-1}(y)=\sqrt{\log(1+y)}$. Consequently,
\begin{align*} 
  \me\left[\sup_{\Vert\widetilde\beta\Vert_1\leq (1+x)M}\mid\epsilon^T
    \widetilde X\widetilde \beta\mid\right]&\leq (1+x)M
  \sqrt{\frac{8n\log(1+K)}{3}}
\end{align*}
One can then derive the second assertion of the
theorem by taking the infimum over $x\in\mrpn$.\\
\noindent As a next step, we deduce similarly as above (compare also to \cite{Bickel09})
\begin{align*}
\mpr\left(\sup_{\Vert\beta\Vert_1\leq M}2\mid\epsilon^T X\beta\mid\geq \lambda M\right)\leq&\mpr\left(\sup_{\Vert\widetilde\beta\Vert_1\leq 1}2\mid\epsilon^T \widetilde X\widetilde \beta\mid\geq \frac{\lambda}{1+x}\right) \\
\leq& K\max_{1\leq j \leq K}\mpr\left(\left| \epsilon^T \frac{\widetilde X^{(j)}}{\sqrt n}\right| \geq \frac{\lambda }{2(1+x)\sqrt n}\right)\\
=&K\mpr\left(|\gamma |\geq  \frac{\lambda}{2(1+x) \sqrt n}\right),
\end{align*}
where $\gamma$ is a standard normally distributed random variable. 
Setting
$\lambda:=2(1+x)\sqrt{ 2n\log(2K/\kappa)}$, we obtain
\begin{align*}
\mpr\left(\sup_{\Vert\beta\Vert_1\leq M}2\mid\epsilon^T X\beta\mid\geq \lambda M\right)
&\leq 2K\exp{\left(\text{-}\frac{\lambda^2}{8(1+x)^2n}\right)}\\
&=2K\exp{\left(\text{-}\log(2K/\kappa)\right)}\\
&=\kappa.
\end{align*}
The first assertion can finally be  derived taking the infimum
over $x\in\mrpn$ and applying monotonous convergence.
\end{proof}


\begin{proposition}\label{lemma.coveringnumbers}
  It holds for all $0<\delta\leq 1$
  \begin{equation*}
 \left(\frac{1}{\delta}\right)^n\leq  N(\delta, \{x\in\mr^n:\|x\|_2\leq 1\},\|\cdot\|_2)\leq \left(\frac{3}{\delta}\right)^n.  
  \end{equation*}

\end{proposition}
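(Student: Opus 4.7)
The plan is to prove both bounds by comparing volumes of Euclidean balls, exploiting the fact that in $\mr^n$ a Euclidean ball of radius $r$ has volume $r^n V_n$, where $V_n$ is the volume of the unit ball.

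For the lower bound, I would take any $\delta$-covering of the unit ball $B := \{x \in \mr^n : \|x\|_2 \leq 1\}$ by $N$ closed balls of radius $\delta$, say $B(y_1, \delta), \ldots, B(y_N, \delta)$. Since their union contains $B$, comparing volumes gives
\begin{equation*}
V_n = \operatorname{vol}(B) \leq \sum_{i=1}^N \operatorname{vol}(B(y_i, \delta)) = N \delta^n V_n,
\end{equation*}
so that $N \geq (1/\delta)^n$. Taking the infimum over all such coverings yields the desired lower bound.

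For the upper bound, I would use the standard packing-to-covering passage. Let $\{x_1, \ldots, x_N\} \subset B$ be a maximal $\delta$-separated set, i.e.\ $\|x_i - x_j\|_2 > \delta$ for $i \neq j$ and no further point of $B$ can be added without violating this. By maximality, every point of $B$ lies within distance $\delta$ of some $x_i$, so $\{x_i\}_{i=1}^N$ is a $\delta$-net, giving the bound $N(\delta, B, \|\cdot\|_2) \leq N$. On the other hand, the open balls $B(x_i, \delta/2)$ are pairwise disjoint and all contained in the ball of radius $1 + \delta/2$ centered at the origin. Volume comparison yields
\begin{equation*}
N (\delta/2)^n V_n \leq (1 + \delta/2)^n V_n,
\end{equation*}
hence $N \leq (2/\delta + 1)^n$. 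For $0 < \delta \leq 1$, we have $2/\delta + 1 \leq 3/\delta$, which produces the claimed bound.

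There is no real obstacle here; the argument is entirely classical and relies only on the scaling property of Lebesgue volume together with the packing-versus-covering duality. The only point deserving mild care is the simplification $2/\delta + 1 \leq 3/\delta$, which uses precisely the hypothesis $\delta \leq 1$ and explains the factor $3$ in the upper bound.
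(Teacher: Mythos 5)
Your proof is correct and follows essentially the same route as the paper: a volume comparison for the lower bound and the standard packing-versus-covering argument (with the $2/\delta+1\leq 3/\delta$ step for $\delta\leq 1$) for the upper bound, which is exactly what the paper's terse ``deduced similarly'' refers to. You simply spell out the details that the paper leaves implicit.
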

\begin{proof}
  If a collection of balls covers the set
  $\{x\in\mr^n:\|x\|_2\leq 1\}$, the volume covered by these balls is at least as
  large as the volume of $\{x\in\mr^n:\|x\|_2\leq 1\}$. Thus,
\begin{equation*}
  N(\delta, \{x\in\mr^n:\|x\|_2\leq 1\},\|\cdot\|_2)\cdot\delta^n\geq 1,
\end{equation*}
and the left inequality follows. The right inequality can be deduced similarly.
\end{proof}

\begin{proof}[Proof Sketch for Example~\ref{example.Correlation.vhdd}]
For weakly correlated designs and for $p$ very large, it holds that 
\begin{align*}
& \log\left(1+N(\sqrt
  n\delta,\operatorname{sconv}\{X^{(1)},...,X^{(p)}\},\Vert \cdot
  \Vert_2)\right)\\
\approx & \log\left(N(\delta,\{x\in\mr^n:\|x\|_2\leq 1\},\Vert \cdot
  \Vert_2)\right).
  \end{align*}
We can then apply Proposition~\ref{lemma.coveringnumbers} to deduce 
\begin{align*}
 \log\left(1+N(\sqrt
  n\delta,\operatorname{sconv}\{X^{(1)},...,X^{(p)}\},\Vert \cdot
  \Vert_2)\right)\approx n \log(1/\delta).
  \end{align*}
Hence, Lemma~\ref{lemma:SJ1} requires that $A$ fulfills for all $0<\delta
\leq 1$ (in fact, it is sufficient for $\frac 1 {\sqrt n}\lesssim\delta
\leq 1$, see the proofs in \cite{Lederer10} for this refinement)
\begin{align*}
 &\left(\frac{A}{\delta}\right)^{2\alpha}\approx n \log(1/\delta)
  \end{align*}
and thus
\begin{align*}
&A^{\alpha}\approx \delta^\alpha\sqrt{n\log(1/\delta)}.
  \end{align*}
We can maximize the right hand side over $0<\delta\leq 1$ and plug the result into \cite[Corollary 5.2]{vdGeer11} to deduce the result.   
\end{proof}

\begin{proof}[Proof for Example~\ref{ex.asym}]
The realizations of the random vectors $X^{(1)}+\nu N$ are clustered with high probability and
can thus be described (in the sense of the definition of $K_\kappa$) by much less than $p$ vectors.\\
To see this, consider $X^{(1)}$
and a rotation $R\in \operatorname{SO(n)}$ such that $RX^{(1)}=(\sqrt n,
  0,\dots,0)^T$. Moreover, let $N$ be a standard normally
  distributed random
  vector in $\mr^n$, $n\geq 2$. The measure corresponding 
  to the random vector $N$ is denoted by $\mpr$. We then obtain with the triangle inequality and the condition on $\nu$ 
\begin{align*}
\mpr\left(\left|1-\frac{\sqrt n}{\|RX^{(1)}+\nu R N\|_2}\right|\geq{1}\right)
= & \mpr\left(\frac{\|RX^{(1)}+\nu R N \|_2}{\sqrt n}\leq \frac 1 2\right)\\
\leq & \mpr\left(\frac{\|X^{(1)}\|_2-\nu\| N\|_2}{\sqrt n}\leq \frac 1 2\right)\\
= & \mpr\left(\frac{\nu\| N\|_2}{\sqrt n}\geq \frac 1 2\right)\\
\leq & \mpr\left(\| N\|_2\geq \frac{\sqrt n}{2\nu}\right)\\
\leq & \mpr\left(\| N\|_2\geq \sqrt{2n}\right).
\end{align*}
Now, we can bound the $l_1$ distance of the vector $\frac{\sqrt n(RX^{(1)}+\nu R N)}{\|RX^{(1)}+\nu R N\|_2}$ generated
according to our setting to the vector $RX^{(1)}$: 
\begin{align*}
 & \mpr\left(\left\|RX^{(1)}-\frac{\sqrt n(RX^{(1)}+\nu R
     N)}{\|RX^{(1)}+\nu R N\|_2}\right\|_1\geq2{\sqrt n}\right)\\
\leq& \mpr\left( \left|1-\frac{\sqrt
      n}{\|RX^{(1)}+\nu R N\|_2}\right|\|RX^{(1)}\|_1\geq{\sqrt n}\right)+
\mpr\left(\|{\nu R N}\|_1\geq{\|RX^{(1)}+\nu R N\|_2}\right)\\
\leq& \mpr\left(\left|1-\frac{\sqrt  n}{\|RX^{(1)}+\nu
    R N\|_2}\right|\geq 1\right)+\mpr\left(\sqrt n\nu\|
     N\|_2\geq \sqrt n-{\nu \| N\|_2}\right)\\
\leq& \mpr\left(\| N\|_2\geq \sqrt{2n}\right)+\mpr\left(\|
     N\|_2\geq\frac{1}{2\nu }\right)\\
\leq& 2\mpr\left(\|
     N\|_2\geq\sqrt{2n}\right).
\end{align*}
 It can be derived easily from 
standard results (see for example \cite[Page 254]{Buhlmann11}) that
\begin{align*}
  & \mpr\left(\| N\|_2\geq \sqrt {2n}\right)\leq\exp\left(\text-\frac{n}{8}\right).
\end{align*}
Consequently,
\begin{equation}\label{eq.New2}
  \mpr\left(\left\|RX^{(1)}-\frac{\sqrt n(RX^{(1)}+\nu R
      N)}{\|RX^{(1)}+\nu R N\|_2}\right\|_1\geq 2\sqrt n\right)\leq 2\exp\left(\text-\frac{n}{8}\right).
\end{equation}
Finally, define for any $0\leq z\leq 2\sqrt n$ the set 
\begin{align*}
 \mathcal C_z:=\{(\sqrt n,z,
  0,\dots,0)^T&,(\sqrt n,\text - z,
  0,\dots,0)^T, (\sqrt n,0,z,
  0,\dots,0)^T,\\
&\hspace{2.5cm}(\sqrt n,0,\text -z,
  0,\dots,0)^T, \dots\}\subset\mr^n 
\end{align*}
These vectors will play the role of the vectors $x^{(1)},...,x^{(l)}$ in
Definition~\eqref{eq:Kfunc}. It holds that 
\begin{equation}\label{eq.New3}
\operatorname{card}(\mathcal C_z)=2(n-1)
\end{equation}
and
\begin{equation}\label{eq.New4}
\mathcal C_z\subset \sqrt{n+z^2}~ S^{n-1}.
\end{equation}
Moreover,
\begin{equation}\label{eq.New5}
\{x\in\mr^n:\|x-RX^{(1)}\|_1\leq z\}\cap\sqrt n S^{n-1}\subset\operatorname{sconv}(\mathcal C_z).
\end{equation}
Inequality~\eqref{eq.New2} and  Inclusion~\eqref{eq.New5} imply that
with probability at least $1-2\exp\left(\text-\frac{n}{8}\right)$
\begin{equation*}
\frac{\sqrt n(X^{(1)}+\nu 
      N)}{\|RX^{(1)}+\nu R N\|_2}
\in\operatorname{sconv}(R^T\mathcal C_{{2\sqrt n}}).
\end{equation*}
Thus, using Equality~\eqref{eq.New3} and Inclusion~\eqref{eq.New4},  we
have $K(\sqrt{5}-1) \leq 2(n-1)$ with probability at least $1-2(p-1)\exp\left(\text-\frac{n}{8}\right)$. 
\end{proof}

\bibliographystyle{alpha}      
\renewcommand{\refname}{}
\bibliography{Literature}

\begin{thebibliography}{DDDM04}

\bibitem[BC11]{BookAGS11}
A.~Belloni and V.~Chernozhukov.
\newblock High dimensional sparse econometric models: An introduction.
\newblock In P.~Alquier, E.~Gautier, and G.~Stoltz, editors, {\em Inverse
  Problems and High-Dimensional Estimation}. Springer (Lecture Notes in
  Statistics), 2011.

\bibitem[BJMO11]{BJMO11}
F.~Bach, R.~Jenatton, J.~Mairal, and G.~Obozinski.
\newblock Convex optimization with sparsity-inducing norms.
\newblock {\em In S. Sra, S. Nowozin, S. J. Wright., editors, Optimization for
  Machine Learning, MIT Press}, 2011.

\bibitem[BRT09]{Bickel09}
P.~Bickel, Y.~Ritov, and A.~Tsybakov.
\newblock Simultaneous analysis of lasso and {D}antzig selector.
\newblock {\em Ann. Statist.}, 37(4):1705--1732, 2009.

\bibitem[BTW07a]{BTWAggSOI}
F.~Bunea, A.~Tsybakov, and M.~Wegkamp.
\newblock Aggregation for {G}aussian regression.
\newblock {\em Ann. Statist.}, 35(4):1674--1697, 2007.

\bibitem[BTW07b]{Lasso2}
F.~Bunea, A.~Tsybakov, and M.~Wegkamp.
\newblock Sparsity oracle inequalities for the {L}asso.
\newblock {\em Electron. J. Stat.}, 1:169--194, 2007.

\bibitem[BvdG11]{Buhlmann11}
P.~B\"uhlmann and S.~van~de Geer.
\newblock {\em Statistics for High Dimensional Data. Methods, Theory and
  Applications}.
\newblock Springer, 2011.

\bibitem[CDS98]{CDS98}
S.~Chen, D.~Donoho, and M.~Saunders.
\newblock Atomic decomposition by basis pursuit.
\newblock {\em SIAM J. Sci. Comput.}, 20(1):33--61, 1998.

\bibitem[CP09]{Candes09}
E.~Cand{\`e}s and Y.~Plan.
\newblock Near-ideal model selection by {$\ell_1$} minimization.
\newblock {\em Ann. Statist.}, 37(5A), 2009.

\bibitem[DDDM04]{DDM04}
I.~Daubechies, M.~Defrise, and C.~De~Mol.
\newblock An iterative thresholding algorithm for linear inverse problems with
  a sparsity constraint.
\newblock {\em Comm. Pure Appl. Math.}, 57(11):1413--1457, 2004.

\bibitem[EHJT04]{Efron-LARS}
B.~Efron, T.~Hastie, I.~Johnstone, and R.~Tibshirani.
\newblock Least angle regression.
\newblock {\em Ann. Statist.}, 32(2):407--499, 2004.
\newblock With discussion, and a rejoinder by the authors.

\bibitem[FHHT07]{FHHT07}
J.~Friedman, T.~Hastie, H.~H{\"o}fling, and R.~Tibshirani.
\newblock Pathwise coordinate optimization.
\newblock {\em Ann. Appl. Stat.}, 1(2):302--332, 2007.

\bibitem[Fu98]{F98}
W.~Fu.
\newblock Penalized regressions: the bridge versus the lasso.
\newblock {\em J. Comput. Graph. Statist.}, 7(3):397--416, 1998.

\bibitem[HCB08]{HCB08}
C.~Huang, G.~Cheang, and A.~Barron.
\newblock Risk of penalized least squares, greedy selection and {L}1
  penalization for flexible function libraries.
\newblock {\em Submitted to Ann. Statist.}, 2008.

\bibitem[HTF01]{HastieTibshiraniFriedman2001}
T.~Hastie, R.~Tibshirani, and J.~Friedman.
\newblock {\em The elements of statistical learning}.
\newblock Springer Series in Statistics. Springer-Verlag, New York, 2001.
\newblock Data mining, inference, and prediction.

\bibitem[KTL11]{Kolt10}
V.~Koltchinskii, A.~Tsybakov, and K.~Lounici.
\newblock Nuclear norm penalization and optimal rates for noisy low rank matrix
  completion.
\newblock {\em Ann. Statist.}, 39(5):2302--2329, 2011.

\bibitem[Led10]{Lederer10}
J.~Lederer.
\newblock Bounds for rademacher processes via chaining.
\newblock {\em Technical Report ETH Z\"urich}, 2010.

\bibitem[MM11]{MM11}
P.~Massart and C.~Meynet.
\newblock The {L}asso as an $\ell_1$-ball model selection procedure.
\newblock {\em Electron. J. Stat.}, 5:669--687, 2011.

\bibitem[OPT00]{O02}
M.~Osborne, B.~Presnell, and B.~Turlach.
\newblock On the {LASSO} and its dual.
\newblock {\em J. Comput. Graph. Statist.}, 9(2):319--337, 2000.

\bibitem[RT11]{RigTsy11}
P.~Rigollet and A.~Tsybakov.
\newblock {E}xponential {S}creening and optimal rates of sparse estimation.
\newblock {\em Ann. Statist.}, 39(2):731--771, 2011.

\bibitem[Tib96]{Tibshirani-LASSO}
R.~Tibshirani.
\newblock Regression shrinkage and selection via the lasso.
\newblock {\em J. Roy. Statist. Soc. Ser. B}, 58(1):267--288, 1996.

\bibitem[Tur05]{T05}
B.~Turlach.
\newblock On algorithms for solving least squares problems under an $\ell_1$
  penalty or an $\ell_1$ constraint.
\newblock In {\em $2004$ Proceedings of the American Statistical Association.
  Statistical Computing Section [CD-ROM], $2572 -- 2577$. Alexandria, VA},
  2005.

\bibitem[vdGB09]{VandeGeerConditionLasso09}
S.~van~de Geer and P.~B{\"u}hlmann.
\newblock On the conditions used to prove oracle results for the lasso.
\newblock {\em Electron. J. Stat.}, 3:1360--1392, 2009.

\bibitem[vdGL11]{vdGeer11b}
S.~van~de Geer and J.~Lederer.
\newblock The bernstein-orlicz norm and deviation inequalities.
\newblock {\em preprint}, 2011.

\bibitem[vdGL12]{vdGeer11}
S.~van~de Geer and J.~Lederer.
\newblock The lasso, correlated design, and improved oracle inequalities.
\newblock {\em Ann. Appl. Stat.}, 2012.
\newblock To appear.

\bibitem[vdVW00]{vdVaart00}
A.~van~der Vaart and J.~Wellner.
\newblock {\em Weak Convergence and Empirical Processes: With Applications to
  Statistics}.
\newblock Springer, 2000.
\newblock ISBN 0-387-94640-3.

\end{thebibliography}

\end{document}